\documentclass[12pt]{article}

\usepackage[margin=1in]{geometry}
\usepackage{amsthm}
\usepackage{amssymb}
\usepackage{amsmath}
\usepackage{graphicx}
\usepackage{mathdots}
\usepackage{mathtools}
\usepackage{xurl}
\usepackage{color}
\usepackage{framed}

\newtheorem{theorem}{Theorem}

\newtheorem{lemma}[theorem]{Lemma}

\newtheorem{conjecture}[theorem]{Conjecture}

\newcommand{\eps}{\varepsilon}
\newcommand{\EE}{\mathbb{E}}
\newcommand{\PP}{\mathbb{P}}
\newcommand{\RR}{\mathbb{R}}

\newcommand{\NNN}{\mathcal{N}}
\newcommand{\SSS}{\mathbb{S}}
\newcommand{\cov}{\mathrm{Cov}}
\newcommand{\Tr}{\mathrm{Tr}}
\newcommand{\polylog}{\mathrm{polylog}}

\theoremstyle{definition}

\title{Matrix Discrepancy for Representations of Finite Groups}

\author{Afonso~S.~Bandeira\footnote{\texttt{bandeira@math.ethz.ch}, Department of Mathematics, ETH Z\"{u}rich.} \qquad Helmut B\"olcskei\footnote{\texttt{hboelcskei@ethz.ch}, Chair for Mathematical Information Science, ETH Z\"{u}rich.}}
\date{}

\begin{document}
\maketitle

\begin{abstract}
We prove the group version of the Matrix Spencer conjecture. For every finite group $G$, there exist signs $\eps\in\{\pm1\}^G$ such that $$\left\| \sum_{g\in G} \eps_g\rho(g) \right\|\leq C\, \sqrt{|G|},$$ where $\rho$ is the left regular representation of $G$ and $C$ is a universal constant.
This conjecture was posed in~\cite{Bandeira:Cayley24}, which settled it for simple groups; we establish it for all finite groups, combining the Peter--Weyl decomposition with the intrinsic-freeness inequalities of~\cite{BandeiraBvH:21} in an iterated partial-coloring argument.
\end{abstract}

\section{Introduction}

The Matrix Spencer conjecture is a far-reaching generalization of Spencer's celebrated ``Six standard deviations suffice'' theorem~\cite{Spencer:85,Gluskin1989}. 

\begin{conjecture}\label{conj:matrixspencer}\cite{Zouzias:12,Meka:14,10L42OP-Bandeira}
 There exists a universal constant $C$ such that for any integer $n$ and any set of self-adjoint\footnote{Note that the self-adjoint requirement is cosmetic, as the Hermitian dilation $A\mapsto\left[\begin{array}{cc} 0 & A \\ A^\ast & 0 \end{array}\right]$ reduces the general case to the self-adjoint one.} $n\times n$ matrices $A_1,\dots,A_n$ each with spectral norm at most one, 
 \[
\min_{\eps\in\{\pm1\}^n} \left\| \sum_{k=1}^n \eps_k A_k \right\| \leq C\sqrt{n}.
 \]
\end{conjecture}

The Noncommutative Khintchine inequality of Lust-Piquard and Pisier~\cite{LustPiquardP:91,Pisier:03} shows that a uniformly random draw of $\eps$ yields the desired bound, in expectation, up to a logarithmic factor. When the matrices commute, Conjecture~\ref{conj:matrixspencer} reduces to Spencer's Theorem~\cite{Spencer:85}, where a uniformly random choice does not suffice and the argument is entropic rather than based on concentration of measure. At the opposite extreme, when the matrices behave very noncommutatively (more precisely, freely), the intrinsic-freeness matrix concentration inequalities of~\cite{BandeiraBvH:21,Bandeira2026ICM} show that a uniformly random choice of signs suffices. Notable partial progress includes establishing the conjecture for block matrices with small blocks~\cite{LevyRR:17} and for low-rank matrices (at first up to rank $\sqrt{n}$ by~\cite{HopkinsRS:22,DadushJR:22} and later up to rank $n/\polylog(n)$ by~\cite{BansalJM:22}), as well as for rank-one matrices~\cite{KyngLuhSong:20}, established via the interlacing-polynomial method of Marcus, Spielman, and Srivastava~\cite{MSS:15} underlying the resolution of the Kadison--Singer problem. Resolving the conjecture in the general case remains an open problem.

Motivated by the dichotomy of behaviors for the commutative and ``very noncommutative'' cases, \cite{Bandeira:Cayley24} proposed a group version of this conjecture in the hope that noncommutativity could be better understood in such an algebraically structured context. In this setting, $G$ is a finite group (with $n=|G|$) and $A_1,\dots,A_n$ are the matrices of the regular representation of $G$. The same reference resolved this question for simple groups and left the general finite-group case as a conjecture (see also~\cite[Conjecture 2]{Randomstrasse101problems2024}), which we prove in this paper.\footnote{After this paper was posted, Akbas and Sra~\cite{AkbasSra:26} independently obtained, by a different argument based on multiscale entropy nets rather than intrinsic freeness, a more general algebraic Matrix Spencer theorem for families contained in a finite-dimensional $C^\ast$-algebra of dimension $O(n)$, of which the group case is a special case.}

\begin{theorem}\label{thm:main}
There exists a universal constant $C$ such that, for any finite group $G$,
$$\min_{\eps\in\{\pm1\}^{G}}\left\| \sum_{g\in G} \eps_g\rho(g) \right\|\leq C\, \sqrt{|G|},$$
where $\rho$ is the left regular representation of $G$.
\end{theorem}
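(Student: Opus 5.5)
By the Peter--Weyl decomposition, $\rho\cong\bigoplus_{\pi\in\widehat G} \pi^{\oplus d_\pi}$. Hence
$$\Bigl\|\sum_g \eps_g\rho(g)\Bigr\|=\max_{\pi\in\widehat G}\Bigl\|\sum_g\eps_g\pi(g)\Bigr\|,$$
and $\sum_\pi d_\pi^2=n:=|G|$. The plan is therefore to find signs $\eps$ that control the $d_\pi\times d_\pi$ matrices $X_\pi(\eps)=\sum_g\eps_g\pi(g)$ simultaneously for all irreducibles $\pi$. The target bound is $\|X_\pi\|\le C\sqrt n$ for every $\pi$.

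\textbf{Large-dimensional representations.} For a random $\eps$, the matrix $X_\pi$ is a Gaussian-like (Rademacher) series. Its variance parameter is $\sigma(X_\pi)^2=\|\sum_g\pi(g)\pi(g)^*\|=n$. By Schur orthogonality, its covariance is essentially isotropic on $M_{d_\pi}$: $\cov(X_\pi)$ has norm $\sigma_*^2\approx n/d_\pi^2$. The intrinsic-freeness inequality of~\cite{BandeiraBvH:21} then gives
$$\|X_\pi\|\le\sqrt n\,\bigl(2+C(\log d_\pi)^{3/4}d_\pi^{-1/2}+\dots\bigr),$$
together with a tail bound of the form $\PP(\|X_\pi\|\ge C\sqrt n+t\,\sigma_*)\le e^{-t^2}$. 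The deviation scale $\sigma_*\approx\sqrt n/d_\pi$ is tiny, so a union bound over all $\pi$ with $d_\pi\ge D_0$ costs nothing: there are at most $n/D_0^2$ such $\pi$, and even for $d_\pi$ only polylogarithmic, the tail $t\approx\sqrt{\log n}$ contributes $\sqrt n\sqrt{\log n}/d_\pi\le\sqrt n$. For Rademacher signs I would use the version of the inequality for general (non-Gaussian) sums from the same circle of results, or Gaussian symmetrization combined with a partial-coloring step with fractional signs.

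\textbf{Small-dimensional representations.} This is where the main obstacle lies, since a random choice loses a $\sqrt{\log}$ factor, as in Spencer's theorem. The total number of real "constraints" coming from these $\pi$ is $\sum_{d_\pi<D}d_\pi^2\le n$, each a $d_\pi\times d_\pi$ block. I would run an iterated partial-coloring argument in the style of Spencer/Gluskin/Giannopoulos: at each round there remain $m$ uncolored elements, and we find a partial coloring $x\in[-1,1]^{m}$ with a constant fraction of coordinates equal to $\pm1$ (Gluskin's convex-body version). The convex body is the intersection of slabs $\{\|\sum_g x_g\pi(g)\|\le c_\pi\sqrt m\}$ over the low-dimensional $\pi$, together with the intrinsic-freeness events for the high-dimensional ones.

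Its Gaussian measure is bounded as follows. For each low-dimensional $\pi$, the Gaussian measure of its slab is at least $\exp(-O(d_\pi^2)\cdot h(c_\pi))$, obtained via a net or entropy argument on $d_\pi\times d_\pi$ matrices, since the covariance is isotropic. Šidák--Khatri / Gaussian correlation then lets us multiply these bounds. Choosing $c_\pi$ depending on how the dimension compares to $n/m$ keeps the total entropy below $cm$, which makes the body large enough for Gluskin's lemma. The restricted sums over the uncolored set $S$ no longer have exact Schur isotropy, but $\sum_{g\in S}\pi(g)\otimes\overline{\pi(g)}$ still has norm at most $|S|$, which suffices for the variance bounds; the $\sigma_*$ bound degrades, and I expect tracking it across rounds to be the delicate point.

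\textbf{Summing the rounds.} Geometric decay $m_{k+1}\le m_k/2$ makes the per-round bounds $C\sqrt{m_k}$ sum to $O(\sqrt n)$ for each $\pi$. A final linear-algebra/rounding step handles the last $O(1)$ coordinates. Taking the maximum over $\pi$ gives the theorem. The crux to verify is that at every round the high-dimensional blocks, restricted to a shrinking set, still satisfy the intrinsic-freeness bound with an $O(\sqrt{m_k})$ error. If needed, I would handle this by letting the threshold $D_0$ grow as $m_k$ shrinks, moving representations into the entropic regime when required.
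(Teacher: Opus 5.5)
Your architecture is the paper's: Peter--Weyl, the Gaussian correlation inequality to multiply the per-irreducible events, intrinsic freeness for large $d_\pi$ with a threshold that moves as the uncolored set shrinks, and iterated Gluskin partial coloring. (The events are operator-norm balls, not slabs, so you need Royen's inequality rather than \v{S}id\'ak.)

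The gap is in the round-by-round accounting. You assert per-round bounds $C\sqrt{m_k}$ for every $\pi$, and no such bound can hold for arbitrary uncolored sets once $m_k\ll n$. For example, let $G=\mathbb{F}_2^k$ and let the uncolored set $S$ consist of $m\le k$ linearly independent vectors. The characters restricted to $S$ realize every sign pattern in $\{\pm1\}^m$. Hence $\|\sum_{g\in S}\gamma_g\rho(g)\|=\|\gamma\|_1\ge\delta m$ for every partial coloring $\gamma$ with $\delta m$ nonzero entries, which far exceeds $C\sqrt m$ when $m\approx\log_2 n$.

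The same obstruction appears in your entropy budget. There can be $\sum_\pi d_\pi^2\approx n$ low-dimensional constraints but only $cm$ to spend. So each slab may fail only with probability $O(m/n)$, which forces $c_\pi\gtrsim\sqrt{\log(en/m)}$.

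The fix, which is what the paper does, is a single threshold $C_1\sqrt{m\log(en/m)}$ for every irreducible at every round. Each block's failure probability is then at most $(en/m)^{-C_3}$, by Gaussian concentration with $\sigma_*\le\sqrt m$ together with either noncommutative Khintchine (for $d_\pi\le(en/m)^4$) or intrinsic freeness (above that). The Gaussian correlation inequality and $|\widehat G|\le n$ then give measure at least $\exp(-2n(en/m)^{-C_3})\ge\exp(-c_2 m)$. No per-block small-ball or entropy estimate is needed. The extra $\sqrt{\log(en/m_k)}$ is harmless, since under geometric decay $\sum_k\sqrt{m_k\log(en/m_k)}=O(\sqrt n)$, as in Spencer's theorem. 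But your summation step has to be redone with it.

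Two smaller corrections. First, by Schur orthogonality $\sum_{g\in G}|\langle B,\pi(g)\rangle_{\mathrm{HS}}|^2=(n/d_\pi)\|B\|_F^2$. So for $S=G$ both $v(X_\pi)^2$ and $\sigma_*(X_\pi)^2$ equal $n/d_\pi$, not $n/d_\pi^2$, and the intrinsic-freeness error term is of order $\sqrt n\,(\log d_\pi)^{3/4}d_\pi^{-1/4}$.

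Second, nothing degrades under restriction to $S$. We have $\cov(X_{\pi,S})\preceq\cov(X_{\pi,G})$, since it is a sum of positive semidefinite terms over a subset. Hence $v(X_{\pi,S})^2\le n/d_\pi$ and $\sigma_*(X_{\pi,S})^2\le|S|$ at every round. Only the ratio $v/\sigma=\sqrt{n/(d_\pi|S|)}$ grows, and that is exactly what the moving threshold $d_\pi>(en/|S|)^4$ absorbs. The point you flagged as delicate is therefore immediate.
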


In fact, $\rho$ can be any unitary representation of $G$, since the left regular representation contains every irreducible representation of $G$, and any unitary representation can be decomposed into irreducibles. Our proof proceeds by simply controlling all irreducible representations simultaneously and does not explicitly use that $\rho$ is the regular representation of $G$.

The proof of Theorem~\ref{thm:main} proceeds via rounds of partial coloring following Gluskin's method. The authors of~\cite{Bandeira:Cayley24} (who overlap with those of the present paper) wrote ``The authors were able to adapt Gluskin's volume argument 
[...] 
to produce a partial coloring, but we were not able to iterate the procedure.'' That difficulty is overcome here by exploiting intrinsic freeness~\cite{BandeiraBvH:21} to iterate the procedure, even though the underlying sets will in general no longer be groups after the first iteration.

\medskip

\textbf{Outline and Notation:} Section~\ref{sec:gaussianlowerboun} contains the crux of the proof, Lemmata~\ref{lemma:gaussianmeasureLBforC} and \ref{lemma:matrixconcentrationestimateforS}, while Section~\ref{sec:roundspartialcoloring} uses these lemmata to prove Theorem~\ref{thm:main}. We use lowercase $c_x$ to denote universal constants that can be replaced by smaller constants, and $C_x$ for universal constants that can be taken larger. We make no effort to keep track of the values of universal constants.

\section{A Gaussian measure lower bound over subsets}\label{sec:gaussianlowerboun}

The goal of this section is to prove the following lemma, which is the crux of the argument and the stage at which the group structure is exploited.

\begin{lemma}\label{lemma:gaussianmeasureLBforC}
    For every $c_2>0$, there exists $C_1$ (not depending on $G$) such that,
    for all non-empty $S\subseteq G$,
    \[
\PP\left\{ \left\|\sum_{g\in S}x_g\rho(g)\right\| \leq C_1\sqrt{|S|\log \left(e\frac{|G|}{|S|}\right)}  \right\} \geq \exp(-c_2|S|),
    \]
    where $x_g$ are i.i.d. (real) $\NNN(0,1)$ random variables.
\end{lemma}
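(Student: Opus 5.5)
The plan is to decompose $\rho$ into irreducibles, bound each irreducible block separately, and recombine the blocks with the Gaussian correlation inequality. By Peter--Weyl, $\rho\cong\bigoplus_{\pi\in\widehat G}\pi^{\oplus d_\pi}$, so
\[
\Bigl\|\sum_{g\in S}x_g\rho(g)\Bigr\|=\max_{\pi\in\widehat G}\|X_\pi\|,\qquad X_\pi:=\sum_{g\in S}x_g\pi(g).
\]
Each event $\{\|X_\pi\|\le t\}$ is a symmetric convex set in $x\in\RR^S$. The Gaussian correlation inequality (Royen) therefore gives
\[
\PP\Bigl\{\max_\pi\|X_\pi\|\le t\Bigr\}\ge\prod_{\pi}\PP\{\|X_\pi\|\le t\}\ge\exp\Bigl(-2\sum_\pi p_\pi\Bigr),
\]
where $p_\pi=\PP\{\|X_\pi\|>t\}$, provided each $p_\pi\le1/2$. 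It thus suffices to choose $t=C_1\sqrt{|S|L}$ with $L=\log(e|G|/|S|)$ and show $\sum_\pi p_\pi\le c_2|S|/2$, with $C_1$ large depending on $c_2$.

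To bound $p_\pi$, I will compute the three parameters of $X_\pi$ that enter intrinsic freeness.
\begin{itemize}
\item Since each $\pi(g)$ is unitary, $\sigma(X_\pi)^2=\|\sum_{g\in S}\pi(g)\pi(g)^*\|=|S|$.
\item For the weak variance $\sigma_*(X_\pi)^2$ and the covariance norm $v(X_\pi)^2=\sup_{\|M\|_F=1}\sum_{g\in S}|\Tr(\pi(g)M)|^2$, enlarge $S$ to $G$ and apply Schur orthogonality, $\sum_{g\in G}|\Tr(\pi(g)M)|^2=\frac{|G|}{d_\pi}\|M\|_F^2$. This gives $\sigma_*^2\le v^2\le\min(|S|,|G|/d_\pi)$.
\end{itemize}
Gaussian concentration then yields
\[
p_\pi\le\exp\Bigl(-\frac{(t-\EE\|X_\pi\|)^2}{2\sigma_*^2}\Bigr).
\]
For $\EE\|X_\pi\|$ I would use noncommutative Khintchine, $\EE\|X_\pi\|\lesssim\sqrt{|S|\log d_\pi}$, when $d_\pi$ is at most a power of $|G|/|S|$. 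Otherwise I would use the intrinsic-freeness bound of~\cite{BandeiraBvH:21}, $\EE\|X_\pi\|\le 2\sigma+C(v\sigma)^{1/2}(\log d_\pi)^{3/4}$.

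\textbf{Summation over $\pi$.} Group the irreducibles dyadically by dimension. Since $\sum_\pi d_\pi^2=|G|$, there are at most $|G|/d^2$ irreducibles with $d_\pi\ge d$.
\begin{itemize}
\item \emph{Small $d_\pi$} (roughly $\log d_\pi\lesssim L$). Here $\EE\|X_\pi\|\le t/2$ and $\sigma_*^2\le|S|$, so $p_\pi\le\exp(-cC_1^2L)=(|S|/e|G|)^{cC_1^2}$. Summing over at most $|G|$ representations gives $|G|(|S|/|G|)^{cC_1^2}\le c_2|S|$ once $C_1$ is large.
\item \emph{Large $d_\pi$}. Here I will use the improved Lipschitz constant $\sigma_*^2\le|G|/d_\pi$, so $p_\pi\le\exp(-c\,t^2d_\pi/|G|)$. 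This decays fast enough in $d_\pi$ to absorb the count $|G|/d_\pi^2$, provided the mean is still $\le t/2$. Summing the dyadic blocks should then give at most $c_2|S|$.
\end{itemize}

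\textbf{Main obstacle.} The hard step is the mean bound $\EE\|X_\pi\|\lesssim\sqrt{|S|L}$ for large $d_\pi$. The difficulty is that $\log d_\pi$ may far exceed $L$, for instance when $|S|$ is a constant fraction of $|G|$ and $d_\pi\sim\sqrt{|G|}$. In that case the freeness term must be used: $v^2\le|G|/d_\pi$ makes $(v\sigma)^{1/2}(\log d_\pi)^{3/4}\le\sqrt{\sigma}\,(|G|/d_\pi)^{1/4}(\log d_\pi)^{3/4}$. This is $o(\sqrt{|S|})$ when $d_\pi\gg(|G|/|S|)\log^3|G|$.

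The intermediate window, $|G|/|S|\lesssim d_\pi\lesssim(|G|/|S|)\log^3|G|$, is where I expect the real work. There I would try the following.
\begin{itemize}
\item First, use the sharper $\log d_\pi$ exponent available in the intrinsic-freeness inequalities, together with $\log d_\pi\le L+3\log\log|G|$, to check whether the loss can still be absorbed into $C_1\sqrt{L}$.
\item Failing that, accept a larger expectation but exploit the much smaller number of such irreducibles, at most $|G|/d_\pi^2\lesssim|S|^2/|G|$. One then only needs $p_\pi$ bounded away from $1$, not small, for these blocks: their contribution to $\sum_\pi\log(1/(1-p_\pi))$ is $O(|S|^2/|G|)$, which is $\le c_2|S|$ when $|S|\le c_2|G|$.
\item The remaining case $|S|\gtrsim|G|$ is handled by the freeness bound directly, since then $|G|/d_\pi\lesssim|S|/d_\pi$.
\end{itemize}
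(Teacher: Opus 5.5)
\textbf{Gap: the uniform mean bound is never proved.} Your architecture matches the paper's: Peter--Weyl, the Gaussian correlation inequality with $\prod_\pi(1-p_\pi)\ge e^{-2\sum_\pi p_\pi}$, Gaussian concentration at scale $\sigma_*\le\sqrt{|S|}$, noncommutative Khintchine for small $d_\pi$, intrinsic freeness with $v^2\le|G|/d_\pi$ for large $d_\pi$, and the crude count of at most $|G|$ irreducibles. However, the step everything rests on is the bound $\EE\|X_\pi\|\le C\sqrt{|S|L}$ for \emph{every} $\pi$. You leave an ``intermediate window'' $|G|/|S|\lesssim d_\pi\lesssim(|G|/|S|)\log^3|G|$ open, and your remedies do not close it.

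The second remedy is circular. For $p_\pi$ to be bounded away from $1$ at the threshold $t=C_1\sqrt{|S|L}$, $t$ must exceed the median of $\|X_\pi\|$. By concentration this means $\EE\|X_\pi\|\le t+O(\sigma_*)$, which is exactly the missing mean bound, so there is no room to ``accept a larger expectation.'' The first remedy, as set up, would fail. It replaces $\log d_\pi$ by its worst case $L+3\log\log|G|$ over the window, in noncommutative Khintchine or in the $(\log d_\pi)^{3/4}$ factor. When $|S|$ is a constant fraction of $|G|$ (so $L=O(1)$), this leaves a $\log\log|G|$ loss that cannot be absorbed into $C_1\sqrt{L}$.

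\textbf{The missing observation.} The window is an artifact of bounding $\log d_\pi$ by $\log|G|$. Your own computation gives the freeness correction as $\sqrt{|S|}\,\bigl(\tfrac{|G|}{|S|}\cdot\tfrac{\log^3 d_\pi}{d_\pi}\bigr)^{1/4}$, and $d\mapsto\log^3 d/d$ is decreasing for $d\ge e^3$. Split at $D=(e|G|/|S|)^4$, as the paper does.
\begin{itemize}
\item If $d_\pi\le D$, then $\log d_\pi\le 4L$, and noncommutative Khintchine gives $\EE\|X_\pi\|\le\sqrt{(8L+3)|S|}\lesssim\sqrt{|S|L}$.
\item If $d_\pi>D$, then $\tfrac{|G|}{|S|}\cdot\tfrac{\log^3 d_\pi}{d_\pi}\le\tfrac{64L^3}{e^4(|G|/|S|)^3}\le 64e^{-4}$, using $L=\log(e|G|/|S|)\le|G|/|S|$. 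Hence $\EE\|X_\pi\|=O(\sqrt{|S|})$.
\end{itemize}
The two regimes cover all dimensions, so there is no window. Your third bullet already uses this boundedness of $\log^3 d/d$, but only for $|S|\gtrsim|G|$.

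Once the mean bound is uniform, $\sigma_*^2\le|S|$ gives $p_\pi\le(e|G|/|S|)^{-C_3}$ for every $\pi$, and your small-$d_\pi$ summation finishes the proof. The dyadic grouping and the improved Lipschitz constant $|G|/d_\pi$ are then unnecessary.

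\textbf{Minor slip.} $v^2\le|S|$ is false in general: for $S=\{g_0\}$, testing $M=\pi(g_0)^*/\sqrt{d_\pi}$ gives $v^2\ge d_\pi$. This does not affect the argument, since you only ever use $\sigma_*^2\le|S|$ and $v^2\le|G|/d_\pi$, and both are correct.
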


The idea is to use the Peter--Weyl theorem to decompose $\rho$ into irreducible representations. Before proving Lemma~\ref{lemma:gaussianmeasureLBforC}, we first establish a matrix concentration result for the spectral norm of the blocks arising from this decomposition.


\begin{lemma}\label{lemma:matrixconcentrationestimateforS}
    Let $\lambda_\pi$ denote an irreducible representation of $G$ of dimension $d_\pi$. 
For any $C_3>0$ there exists $C_1$ (not depending on $G$ or $\lambda_\pi$) such that, for any non-empty subset $S\subseteq G$, we have the following estimate: 
    \begin{equation}
\PP\left\{ \left\|\sum_{g\in S}x_g\lambda_\pi(g)\right\| > C_1 \sqrt{|S|\log \left(e\frac{|G|}{|S|}\right)} \right\} \leq \exp\left[-\, C_3  \log \left(e\frac{|G|}{|S|}\right)\right],
    \end{equation}
    where the $x_g$ are i.i.d. (real) $\NNN(0,1)$ random variables.
\end{lemma}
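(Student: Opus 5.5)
We bound $\EE\|X\|$ for $X:=\sum_{g\in S}x_g\lambda_\pi(g)$ and then apply Gaussian concentration. The statement is unaffected by conjugating $\lambda_\pi$, so we may take $\lambda_\pi$ unitary. Write $L:=\log(e|G|/|S|)$. The three parameters of the Gaussian matrix series that we need are as follows.

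\textbf{(i)} $\sigma^2=\|\sum_{g\in S}\lambda_\pi(g)\lambda_\pi(g)^*\|=|S|$, since each $\lambda_\pi(g)$ is unitary.

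\textbf{(ii)} The weak variance is
\[
\sigma_*^2=\sup_{\|u\|=\|w\|=1}\sum_{g\in S}|u^*\lambda_\pi(g)w|^2 \le \min\Bigl(|S|,\tfrac{|G|}{d_\pi}\Bigr).
\]
The bound $|S|$ holds because each summand is at most one. The bound $|G|/d_\pi$ follows by extending the sum to all of $G$ and using Schur orthogonality, $\sum_{g\in G}|u^*\lambda_\pi(g)w|^2=|G|/d_\pi$.

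\textbf{(iii)} The covariance parameter is $v^2=\|\cov(X)\|=\sup_{\|M\|_F=1}\sum_{g\in S}|\Tr(M\lambda_\pi(g))|^2\le |G|/d_\pi$. This again follows by extending to $G$: Schur orthogonality gives $\sum_{g\in G}|\Tr(M\lambda_\pi(g))|^2=\frac{|G|}{d_\pi}\|M\|_F^2$.

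Since $X$ is a $\sigma_*$-Lipschitz function of $(x_g)_{g\in S}$, Gaussian concentration gives
\[
\PP\{\|X\|>\EE\|X\|+\sigma_*u\}\le e^{-u^2/2}.
\]
We take $u=\sqrt{2C_3L}$ and use $\sigma_*\le\sqrt{|S|}$. The tail term is then $\sqrt{2C_3}\sqrt{|S|L}$ and the failure probability is exactly $\exp(-C_3L)$. So everything reduces to showing $\EE\|X\|\le C\sqrt{|S|L}$ with $C$ universal, after which we set $C_1=C+\sqrt{2C_3}$.

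For the expectation we split on the size of $d_\pi$ relative to $|G|/|S|$.

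\emph{Small dimension,} $d_\pi\le C_4(|G|/|S|)^2$. Here the noncommutative Khintchine inequality gives $\EE\|X\|\lesssim\sigma\sqrt{\log(ed_\pi)}\lesssim\sqrt{|S|}\sqrt{\log C_4+2\log(|G|/|S|)}\lesssim\sqrt{|S|L}$, using $L\ge1$.

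\emph{Large dimension,} $d_\pi> C_4(|G|/|S|)^2$. Here we use the intrinsic freeness bound of~\cite{BandeiraBvH:21},
\[
\EE\|X\|\le 2\sigma+C\bigl(v^{1/2}\sigma^{1/2}(\log d_\pi)^{3/4}+\sigma_*\sqrt{\log d_\pi}\bigr).
\]
It suffices that $v^2(\log d_\pi)^3\lesssim|S|$ and $\sigma_*^2\log d_\pi\lesssim|S|$. Since both $v^2$ and $\sigma_*^2$ are at most $|G|/d_\pi$, both conditions follow from
\[
\frac{d_\pi}{(\log d_\pi)^3}\gtrsim\frac{|G|}{|S|}.
\]
Write $d_\pi=\sqrt{d_\pi}\cdot\sqrt{d_\pi}$. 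The first factor satisfies $\sqrt{d_\pi}\ge\sqrt{C_4}\,|G|/|S|$. The second satisfies $\sqrt{d_\pi}\ge(\log d_\pi)^3$ once $d_\pi\ge C_4$, and this holds since $|G|\ge|S|$. With $C_4$ a large absolute constant both conditions hold, and then $\EE\|X\|\lesssim\sqrt{|S|}$.

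The main obstacle is this large-dimension regime. Plain Khintchine would lose a factor $\sqrt{\log d_\pi}$, which may be as large as $\sqrt{\log|G|}\gg\sqrt L$ when $S$ is a large subset of $G$. Intrinsic freeness can recover that factor only because the group structure, through Schur orthogonality, makes $v$ and $\sigma_*$ small compared to $\sigma$. The delicate point is checking that the crude extension from $S$ to all of $G$ leaves enough room. The threshold between the two regimes is chosen quadratic in $|G|/|S|$ precisely so that it absorbs the polylogarithmic losses $(\log d_\pi)^{3}$ while staying harmless for Khintchine.
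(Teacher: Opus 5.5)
Your proof is correct and follows the paper's argument: the same three parameters via Schur orthogonality, Gaussian concentration with $\sigma_*\le\sqrt{|S|}$, and a split between noncommutative Khintchine for small $d_\pi$ and the intrinsic-freeness bound for large $d_\pi$. Your quadratic threshold $C_4(|G|/|S|)^2$ (versus the paper's $(e|G|/|S|)^4$) and the extra, harmless $\sigma_*\sqrt{\log d_\pi}$ term are cosmetic differences. One small correction: the statement is not invariant under arbitrary conjugation of $\lambda_\pi$, since the operator norm is only unitarily invariant and a non-unitary realization can make the norm arbitrarily large. Unitarity should therefore be taken as an implicit hypothesis, exactly as the paper does when it uses $\lambda_\pi(g)\lambda_\pi(g)^*=I$.
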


\begin{proof}[Proof of Lemma~\ref{lemma:matrixconcentrationestimateforS}]

Consider the Gaussian random matrix $X_{\pi,S} = \sum_{g\in S}x_g\lambda_\pi(g)$. The relevant quantities are the matrix variance $\sigma(X_{\pi,S})$, the weak variance $\sigma_\ast(X_{\pi,S})$, and the so-called $v$-parameter $v(X_{\pi,S})$, given by

\begin{align}
\sigma(X_{\pi,S})^2 &:= \max\left\{
\left\| \EE X_{\pi,S}X_{\pi,S}^\ast \right\|,
\left\| \EE X_{\pi,S}^\ast X_{\pi,S} \right\|
\right\} 
=
|S|, \label{eq:Computingsigma}
\\
\sigma_\ast(X_{\pi,S})^2 &:= \max_{u,v\in\SSS^{d_\pi-1}} \EE |u^\ast X_{\pi,S} v|^2 \leq |S|,\label{eq:Computingsigmastar}
\\
v(X_{\pi,S})^2 &:= \left\| \cov(X_{\pi,S})\right\| \leq \left\| \cov(X_{\pi,G})\right\| = \frac{|G|}{d_\pi}. \label{eq:Computingvparameter}
\end{align}

A direct computation using $\lambda_\pi(g)\lambda_\pi(g)^\ast = \lambda_\pi(g)^\ast\lambda_\pi(g)=I$ establishes~\eqref{eq:Computingsigma}. Since $\|\lambda_\pi(g)\|=1$, we have $\sigma_\ast(X_{\pi,S})^2\leq |S|$, which is~\eqref{eq:Computingsigmastar}. It remains to prove~\eqref{eq:Computingvparameter}, which we use in the high-dimensional case below. 
To this end, note that
\[
\max_{\|B\|_F=1}\sum_{g\in G} |\Tr(B^\ast  \lambda_\pi(g) )|^2 = \sum_{g\in G}\left|\sum_{i,j=1}^{d_\pi} \overline{B_{ij}}\lambda_\pi(g)_{ij}\right|^2 =  \sum_{i,j,k,\ell=1}^{d_\pi} \overline{B_{ij}}B_{k\ell}\sum_{g\in G}\lambda_\pi(g)_{ij}\overline{\lambda_\pi(g)_{k\ell}}. 
\]
Schur orthogonality yields $\sum_{g\in G}\lambda_\pi(g)_{ij}\overline{\lambda_\pi(g)_{k\ell}}=\delta_{ik}\delta_{j\ell} \frac{|G|}{d_\pi}$, which establishes~\eqref{eq:Computingvparameter}.\footnote{There are two other ways of viewing this computation: (i)~Schur orthogonality
implies that $\{\lambda_\pi(g)\}_{g\in G}$ is a tight frame with respect to the
Hilbert--Schmidt inner product; or (ii)~we are, in essence, computing the $v$-parameter for a
Wigner matrix of size $d_\pi\times d_\pi$ with entrywise variance
$\frac{|G|}{d_\pi}$ (see Lemma~7 in~\cite{Bandeira:Cayley24}).
}
Lipschitz Gaussian concentration (e.g.~\cite[Theorem 8.9]{MDSbook}) gives 
\begin{equation*}
\PP\left\{
\|X_{\pi,S}\| > \EE\|X_{\pi,S}\| + t \sigma_\ast(X_{\pi,S}) 
\right\}
\leq \exp\left( \frac{-t^2}2 \right),
\end{equation*}
for any $t\geq 0$. Recall that $\sigma_\ast(X_{\pi,S}) \leq \sqrt{|S|}$. Thus, setting $t = \sqrt{2C_3  \log\left(e\frac{|G|}{|S|}\right)}$, we have

\begin{equation}\label{eq:tailboundafterGC}
\PP\left\{
\|X_{\pi,S}\| > \EE\|X_{\pi,S}\| + \sqrt{2C_3  \log\left(e\frac{|G|}{|S|}\right)} \sqrt{|S|} 
\right\}
\leq \exp\left[-C_3  \log \left(e\frac{|G|}{|S|}\right)\right].
\end{equation}

It remains only to establish the upper bound 
$\EE\|X_{\pi,S}\|
\leq C_4\sqrt{|S|\log\left(e\frac{|G|}{|S|}\right)}$
for some constant $C_4$. Here, we treat low- and high-dimensional representations differently. For low-dimensional representations, the Noncommutative Khintchine inequality~\cite[Theorem 9.1]{MDSbook} gives
\begin{equation}
\EE
\|X_{\pi,S}\|  
\leq 
\sigma(X_{\pi,S}) \sqrt{2\lceil\log(d_\pi)\rceil+1} = \sqrt{(2\lceil\log(d_\pi)\rceil+1)|S|},
\end{equation}
which provides the desired upper bound as long as $d_\pi \leq \left(e\frac{|G|}{|S|} \right)^{C_5}$ (and $C_5=4$ will suffice).

For high-dimensional representations, we use the intrinsic-freeness inequalities in \cite{BandeiraBvH:21}. Namely, when $d_\pi > \left(e\frac{|G|}{|S|} \right)^{4}$, \cite[Theorem 1.2]{BandeiraBvH:21} gives

\begin{align}
\EE\|X_{\pi,S}\|
&\leq 2\sigma(X_{\pi,S}) + C_6 \log(d_\pi)^{\frac34}\sqrt{v(X_{\pi,S})\sigma(X_{\pi,S})} \nonumber\\
&= \sqrt{|S|}\left(2 + C_6 \log(d_\pi)^{\frac34}\left(\frac{|G|}{d_\pi|S|}\right)^{\frac14}\right) \nonumber\\
&= \sqrt{|S|}\left(2 + C_6 \left(\frac{\log(d_\pi)^3}{d_\pi}\,\frac{|G|}{|S|}\right)^{\frac14}\right) \nonumber\\
&\leq \sqrt{|S|}\left(2 + C_6 \left(\frac{4^3\log\left(e\frac{|G|}{|S|}\right)^3}{\left(e|G|/|S|\right)^{4}}\,\frac{|G|}{|S|}\right)^{\frac14}\right) \nonumber\\
&\leq C_7 \sqrt{|S|},
\end{align}
for some constant $C_7$.
\end{proof}

\begin{proof}[Proof of Lemma~\ref{lemma:gaussianmeasureLBforC}]

Let $C_3>\log(2)$ be large enough (to be fixed later, depending on $c_2$) and let $C_1$ be the constant that is guaranteed to exist by Lemma~\ref{lemma:matrixconcentrationestimateforS} (for the particular choice of $C_3$).

    Let $\Sigma$ denote the set of isomorphism classes of irreducible representations of $G$, and let $d_\pi$ denote the degree of $\pi\in\Sigma$. Pick a representative $\lambda_\pi\in\pi$ per isomorphism class. By the Peter--Weyl theorem (cf. proof of Lemma~7 in~\cite{Bandeira:Cayley24}), there exists a unitary matrix $U$ (depending only on the group) such that 
    \[
    \rho(g) = U\bigg[\bigoplus_{\pi\in\Sigma} (I_{d_\pi}\otimes \lambda_\pi(g))\bigg]U^*,
\qquad
g\in G.
\]
Thus,
\[
\PP\left\{ \left\|\sum_{g\in S}x_g\rho(g)\right\| \leq C_1\sqrt{|S|\log \left(e\frac{|G|}{|S|}\right)}  \right\}  = \PP
\bigcap_{\pi \in \Sigma} \left\{
\left\|\sum_{g\in S}x_g\lambda_{\pi}(g)\right\| \leq C_1\sqrt{|S|\log \left(e\frac{|G|}{|S|}\right)} \right\} 
.
    \]
Since this corresponds to the Gaussian measure of the intersection of centrally symmetric convex sets, the Gaussian Correlation Inequality~\cite{Royen:14,LatalaM:17} implies
\begin{align}
&\PP\left\{ \left\|\sum_{g\in S}x_g\rho(g)\right\| \leq C_1\sqrt{|S|\log \left(e\frac{|G|}{|S|}\right)} \right\} \nonumber\\
&\quad\geq \prod_{\pi \in \Sigma}\PP\left\{ \left\|\sum_{g\in S}x_g\lambda_{\pi}(g)\right\| \leq C_1\sqrt{|S|\log \left(e\frac{|G|}{|S|}\right)} \right\} \nonumber\\
&\quad= \prod_{\pi\in\Sigma} (1-p_\pi) = \exp\left( \sum_{\pi\in\Sigma} \log(1-p_\pi)\right), \label{eq:afterGCI}
\end{align}
where we define $p_\pi:=\PP\left\{ \left\|\sum_{g\in S}x_g\lambda_{\pi}(g)\right\| > 
C_1\sqrt{|S|\log \left(e\frac{|G|}{|S|}\right)}
\right\}$.
By Lemma~\ref{lemma:matrixconcentrationestimateforS} we have $p_\pi \leq \exp\left[-C_3  \log \left(e\frac{|G|}{|S|}\right)\right]\leq  \exp\left(-C_3\right)\leq \frac12$. Thus, we have
\begin{equation}
    \PP\left\{ \left\|\sum_{g\in S}x_g\rho(g)\right\| \leq C_1\sqrt{|S|\log \left(e\frac{|G|}{|S|}\right)}  \right\}  \geq \prod_{\pi\in\Sigma} (1-p_\pi) 
    \geq \exp\left( -2\sum_{\pi\in\Sigma} p_\pi\right), 
\end{equation}
where the last inequality used $p_\pi\leq \frac12$. Note that
\[
\frac2{|S|}\sum_{\pi\in\Sigma} p_\pi \leq \frac2{|S|}|\Sigma| \exp\left[-C_3  \log \left(e\frac{|G|}{|S|}\right)\right]\leq 2\frac{|G|}{|S|} \left(e\frac{|G|}{|S|}\right)^{-C_3} = 2e^{-C_3}\left(\frac{|S|}{|G|}\right)^{C_3-1}.
\]
For any $c_2>0$, there exists $C_3$ large enough that
\[
2e^{-C_3}\left(\frac{|S|}{|G|}\right)^{C_3-1}\leq c_2 \qquad\text{for all } |S|\leq |G|.
\]
Picking such $C_3$ yields
\begin{equation*}
    \PP\left\{ \left\|\sum_{g\in S}x_g\rho(g)\right\| \leq C_1\sqrt{|S|\log \left(e\frac{|G|}{|S|}\right)}  \right\}
    \geq \exp\left( -2\sum_{\pi\in\Sigma} p_\pi\right) 
    \geq \exp\left(-c_2|S|\right).
\end{equation*}
\end{proof}

\section{Rounds of Partial Coloring}\label{sec:roundspartialcoloring}

With Lemma~\ref{lemma:gaussianmeasureLBforC} established, the rest of the proof relies on the now-standard partial-coloring strategy dating back to the work of Gluskin~\cite{Gluskin1989,Giannopoulos1997,Bansal2022}.

\begin{lemma}[{\cite{Gluskin1989}; see~\cite{Bansal2022} for this formulation}]\label{lemma:gluskinpartialcoloring}
    Let $n\geq1$ be an integer. There is a small constant $\delta>0$ such that any symmetric convex body $K\subseteq\RR^n$ with $\PP(g\in K)\geq 2^{-\delta n}$, where $g\sim\mathcal N(0,I_n)$, contains a point $\gamma\in\{-1,0,1\}^n$ with at least $\delta n$ coordinates equal to $\pm1$.
\end{lemma}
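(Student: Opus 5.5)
The plan is the classical pigeonhole argument of Gluskin in its Gaussian form. I translate $K$ by every sign vector, use a Gaussian shift inequality to show that some point lies in exponentially many translates, and take a half-difference of two far-apart sign vectors. The point to be produced is $\gamma=(x-y)/2$ for suitable $x,y\in\{-1,1\}^n$. Its coordinates lie in $\{-1,0,1\}$, and it has exactly $d_H(x,y)$ coordinates equal to $\pm1$, where $d_H$ is Hamming distance. So it suffices to find $x,y\in\{-1,1\}^n$ with $d_H(x,y)\geq\delta n$ and $x-y\in 2K$.

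\emph{Step 1 (shift inequality).} For symmetric convex $K$ and any $v\in\RR^n$, I will show $\PP(g\in K+v)\geq e^{-\|v\|^2/2}\,\PP(g\in K)$. Write
\[
\PP(g\in K+v)=(2\pi)^{-n/2}\int_K e^{-\|y\|^2/2}\,e^{-\langle y,v\rangle}\,e^{-\|v\|^2/2}\,dy .
\]
Then average the integrand over $y$ and $-y$, which is allowed because $K=-K$. Since $\cosh\geq1$, the claim follows. Applied with $v=x\in\{-1,1\}^n$, where $\|x\|^2=n$, this gives $\PP(g\in K+x)\geq e^{-n/2}2^{-\delta n}$ for each of the $2^n$ sign vectors.

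\emph{Step 2 (pigeonhole).} Summing over $x$ gives
\[
\EE\,\#\{x\in\{-1,1\}^n:\ g\in K+x\}\;\geq\;\big(2e^{-1/2}\big)^n2^{-\delta n}.
\]
Note that $2e^{-1/2}\approx1.213>1$. Hence there is a point $z\in\RR^n$ and a set $A\subseteq\{-1,1\}^n$ with $|A|\geq (2e^{-1/2}2^{-\delta})^n$ and $z-x\in K$ for all $x\in A$. For any $x,y\in A$, symmetry and convexity give
\[
\frac{x-y}{2}=\frac{(z-y)+(-(z-x))}{2}\in K.
\]

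\emph{Step 3 (choosing $\delta$).} Fix any $y\in A$. The Hamming ball of radius $\delta n$ around $y$ has at most $2^{H(\delta)n}$ points, where $H$ is the binary entropy. I choose $\delta$ small enough that $H(\delta)+\delta<\log_2(2e^{-1/2})\approx0.279$, which is possible since $H(\delta)\to0$ as $\delta\to0$. Then $A$ cannot be contained in that ball. So some $x\in A$ has $d_H(x,y)>\delta n$, and $\gamma=(x-y)/2$ is the desired point.

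The only step needing care is Step 2. I must check that the exponential gain from the $2^n$ translates beats the $e^{-n/2}$ loss from the shift; this works precisely because $2>e^{1/2}$. Everything else is bookkeeping of the constant $\delta$.
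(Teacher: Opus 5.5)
Your proof is correct and complete. The shift inequality $\PP(g\in K+v)\geq e^{-\|v\|^2/2}\,\PP(g\in K)$ holds, and it needs only $K=-K$. Averaging over the $2^n$ translates $K+x$, $x\in\{-1,1\}^n$, works because $2>e^{1/2}$, and the midpoint step giving $(x-y)/2\in K$ is right. The Hamming-ball count also goes through; for instance $\delta=0.03$ satisfies $H(\delta)+\delta<1-\frac{1}{2\ln 2}$.

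The paper does not prove this lemma; it imports it from Gluskin and from Bansal's survey. Your argument is essentially the standard Gaussian (Giannopoulos-style) proof behind that cited formulation. The one cosmetic difference is how you separate two points of $A$: you count the Hamming ball around a fixed $y\in A$ instead of using the Kleitman-type diameter bound that is sometimes invoked. This changes only the value of $\delta$.
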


\begin{proof}[Proof of Theorem~\ref{thm:main}]
    We adopt the nomenclature of partial colorings and coloring rounds: the goal is to assign a value $\pm1$, referred to as a color, to each entry of $\eps$. Let $S_t$ be the set of uncolored coordinates after round $t$, with $S_0=G$. At round $t$ we use Lemmata~\ref{lemma:gaussianmeasureLBforC} and \ref{lemma:gluskinpartialcoloring} (with $c_2=\delta \log 2$) for $S_t\subseteq G$ to obtain a partial coloring of $S_t$ in which at least $\delta|S_t|$ coordinates are colored. Thus, for all $t\geq 0$, $|S_{t+1}| \leq (1-\delta)|S_{t}|$, and since the $|S_{t}|$ are all integers, the process must terminate with $S_T=\emptyset$ for some $T$.

    By the triangle inequality, the resulting assignment satisfies 
    \begin{equation}\label{eq:partialcoloringtriangular}
\left\|\sum_{g\in G}\eps_g\rho(g)\right\| \leq \sum_{t=0}^{T-1} \left\|\sum_{g\in S_t\setminus S_{t+1}}\eps_g\rho(g)\right\| \leq \sum_{t=0}^{T-1} C_1\sqrt{|S_t|\log\left(e\frac{|G|}{|S_t|}\right)}.
    \end{equation}
    The derivative of $x\mapsto x\log(e/x)$ equals $\log(1/x)$, so $x\log(e/x)$ is increasing on $(0,1)$; hence $\sqrt{|S_t|\log\!\big(\tfrac{e|G|}{|S_t|}\big)}
\le\sqrt{(1-\delta)^t|G|\log\!\big(\tfrac{e}{(1-\delta)^t}\big)}$
since $|S_t|\le(1-\delta)^t|G|$.
Substituting into~\eqref{eq:partialcoloringtriangular}, we have
    \[
\left\|\sum_{g\in G}\eps_g\rho(g)\right\| \leq   C_1\sum_{t=0}^{T-1} \sqrt{(1-\delta)^t|G|\log\frac{e}{(1-\delta)^t}} = C_1\sqrt{|G|} \sum_{t=0}^{T-1} \sqrt{(1-\delta)^t\log\frac{e}{(1-\delta)^t}}.
    \]
The proof is completed by noting that, since $\delta>0$, there is a constant $C_\delta$ such that $\sum_{t=0}^{T-1} \sqrt{(1-\delta)^t\log\frac{e}{(1-\delta)^t}}\leq \sum_{t=0}^{\infty} \sqrt{(1-\delta)^t\log\frac{e}{(1-\delta)^t}}\leq C_\delta$.
\end{proof}




\section*{Acknowledgments and use of AI}
We used modern AI tools in this work, primarily ChatGPT Pro 5.5 and Claude Opus (versions 4.7 and 4.8). 
One of the key arguments---the one that broke the
$\sqrt{\log(|G|)}$ bottleneck, reducing it to a $\sqrt{\log\log(|G|)}$ factor
that we subsequently removed---was found by ChatGPT Pro 5.5. That argument no
longer appears in the paper as it was subsumed by the later improvement, but
in our view it was the decisive step in this work. The same AI tools were also used to organize partial results, run numerical experiments, and efficiently test ideas and approaches.
For instance, once we understood how low- and high-dimensional representations
could be handled simultaneously in Lemma~\ref{lemma:gaussianmeasureLBforC}, we
asked ChatGPT Pro 5.5 to check whether the two regimes together covered the full
range or left a gap ``in the middle''; in the argument as written, this split appears only at the very end, but in
earlier versions it occupied a more central place.

The paper was written entirely by the authors. AI was used to scan the manuscript
for typos and inconsistencies in style and notation, and to obtain general suggestions, some of which we adopted.
All errors are ours.

\bibliographystyle{alpha}   
\bibliography{references}

\end{document}